\theoremstyle{definition}
\newtheorem{df}{Definition}
\newtheorem{note}{Notation}
\theoremstyle{theorem}
\newtheorem{prop}{Proposition}
\newtheorem{thm}{Theorem}
\newtheorem{lem}{Lemma}
\newtheorem{sublem}{Sublemma}
\title{Average-sized miniatures and normal-sized miniatures\\
of lattice polytopes}
\author{Takashi HIROTSU}
\date{\today}
\begin{document}
\maketitle
\begin{abstract}
Let $d \geq 0$ be an integer and let $P \subset \mathbb R^d$ be a $d$-dimensional lattice polytope. 
We call a polytope $M \subset \mathbb R^d$ such that $M \subset P$ and $M \sim P$ a {\itshape miniature} of $P,$ and it is said to be {\itshape horizontal} if $M$ is transformed into $P$ by translating and positive integral rescaling. 
A miniature $M$ of $P$ is said to be {\itshape average-sized} (resp.~{\itshape normal-sized}) if the volume of $M$ is equal to the limit of the sequence whose $n$-th term is the average of the volumes of all miniarures (resp.~all horizontal miniatures) whose vertices belong to $(n^{-1}\mathbb Z)^d.$ 
We prove that, for any lattice square $P \subset \mathbb R^2,$ the ratio of the areas of an average-sized miniature of $P$ and $P$ is $2:15.$ 
We also prove that, for any lattice simplex $P \subset \mathbb R^d,$ the ratio of the volume of a normal-sized miniature of $P$ to that of $P$ is $1:\binom{2d+1}{d}.$ 
This ratio is same as the known result for the hypercube $[0,1]^d$ provided by the author.
\end{abstract}
\section{Introduction}
Let $d \geq 0$ be an integer. 
The author proved the following theorem in \cite{Hir}.
\begin{thm}[{\cite[Theorem 2]{Hir}}]\label{thm-1}
The sequence whose $n$-th term is the average of the volumes of all hypercubes in $[0,1]^d$ whose vertices belong to $(n^{-1}\mathbb Z)^d$ and whose edges are parallel to those of $[0,1]^d$ converges to $\binom{2d+1}{d}^{-1}.$
\end{thm}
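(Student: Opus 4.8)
The plan is to reduce each average to a ratio of two explicit sums, recognise these as Riemann sums, and evaluate the resulting integrals. I adopt the natural finite reading of the $n$-th average: it ranges over the axis-parallel cubes with vertices in the grid $(\tfrac1n\mathbb Z)^d\cap[0,1]^d$, which is the finite model making the average well defined and is consistent with the asserted limit. Any such cube is determined by an integer side length $m\in\{1,\dots,n\}$, giving edge length $m/n$, together with an integer lower corner $(i_1,\dots,i_d)$ with $0\le i_j\le n-m$; hence there are exactly $(n-m+1)^d$ cubes of edge $m/n$, each of volume $(m/n)^d$. Writing $A_n$ for the $n$-th average, this yields
\begin{equation}
A_n=\frac{\sum_{m=1}^{n}(n-m+1)^d\,(m/n)^d}{\sum_{m=1}^{n}(n-m+1)^d}=\frac{\sum_{m=1}^{n}(n-m+1)^d\,m^d}{n^d\sum_{m=1}^{n}(n-m+1)^d}.
\end{equation}

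Next I would let $n\to\infty$. Putting $s=m/n$ and using mesh $1/n$ exhibits the numerator as a Riemann sum,
\begin{equation}
\frac1{n^{2d+1}}\sum_{m=1}^{n}(n-m+1)^d\,m^d\;\longrightarrow\;\int_0^1(1-s)^d s^d\,ds=\frac{(d!)^2}{(2d+1)!},
\end{equation}
the integral being the Beta value $B(d+1,d+1)$. The substitution $k=n-m+1$ turns the denominator into $\sum_{k=1}^n k^d$, so $\tfrac1{n^{d+1}}\sum_{m=1}^n(n-m+1)^d\to\int_0^1(1-s)^d\,ds=\tfrac1{d+1}$. Dividing the two asymptotics (the common factor $n^{2d+1}$ cancels) gives
\begin{equation}
A_n\longrightarrow (d+1)\cdot\frac{(d!)^2}{(2d+1)!}=\frac{(d+1)!\,d!}{(2d+1)!}=\binom{2d+1}{d}^{-1},
\end{equation}
which is the claimed limit.

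The counting and the Beta-integral evaluation are routine. The step needing genuine care is the convergence in the second display: I must control the difference between the sum and the integral, not merely pointwise but so that it still vanishes after the normalisation by $n^{2d+1}$. Since the rescaled summand is the product of the smooth factors $(1-s)^d$ and $s^d$ on the compact interval $[0,1]$, a standard Riemann-sum error estimate suffices, and the shift hidden in the ``$+1$'' of $n-m+1$ affects only lower-order terms. Equivalently, and perhaps more cleanly, both numerator and denominator are polynomials in $n$ (by Faulhaber-type identities) of degrees $2d+1$ and $d+1$ whose leading coefficients are exactly $\tfrac{(d!)^2}{(2d+1)!}$ and $\tfrac1{d+1}$; reading off these leading terms bypasses any delicate limiting argument, and this is the route I would ultimately favour for a fully rigorous proof.
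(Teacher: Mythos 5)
Your proof is correct. Note first that this paper does not actually prove Theorem \ref{thm-1}; it imports it from \cite{Hir}. But the argument the paper uses for the analogous Theorem \ref{thm-3} (and, judging from the fact that Proposition \ref{prop-4} is quoted from \cite{Hir}, the original proof of Theorem \ref{thm-1} as well) runs differently from yours at the key step: after the same count of $(n-m+1)^d$ cubes of edge $m/n$, the paper expands $(n+1-m)^d$ by the binomial theorem, extracts the leading coefficient of each resulting power sum, and is left with the alternating sum $\sum_{r=0}^d \frac{(-1)^r}{d+r+1}\binom{d}{r}$, which it then closes using the separately-proved identity $\binom{2d+1}{d}(d+1)\sum_{r=0}^d\frac{(-1)^r}{d+r+1}\binom{d}{r}=1$ (Proposition \ref{prop-4}). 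You instead recognise the normalised numerator as a Riemann sum for $\int_0^1 s^d(1-s)^d\,ds=B(d+1,d+1)=\frac{(d!)^2}{(2d+1)!}$ and read off the answer in closed form at once; this is exactly the integral whose binomial expansion produces the paper's alternating sum, so the two computations are equivalent, but your route makes Proposition \ref{prop-4} unnecessary rather than requiring it as an input. Your ``Faulhaber'' fallback is in fact the paper's method. Two small points you handled well and should keep explicit: the reading of $(\mathbb Z[1/n])^d$ as the grid $(\tfrac1n\mathbb Z)^d$ (the literal ring $\mathbb Z[1/n]$ is dense for $n\ge 2$, but the paper's own computations confirm the grid reading), and the fact that the $+1$ shift in $n-m+1$ perturbs the Riemann sum only at order $O(1/n)$, so the limit is unaffected.
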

To consider a problem similar to this theorem for other shapes, we introduce the concepts below. 
In this article, we refer to a $d$-dimensional convex polytope in $\mathbb R^d$ as a polytope.
\begin{note}
For each polytope $P \subset \mathbb R^d,$ vector $a \in \mathbb R^d,$ and real number $c > 0,$ we define 
\[ P+a = \{ x+a \mid x \in P\} \quad\text{and}\quad cP = \{ cx \mid x \in P\}.\]
If polytopes $P,$ $P' \subset \mathbb R^d$ are similar, we write $P \sim P'.$
\end{note}
\begin{df}
Let $P \subset \mathbb R^d$ be a polytope.
\begin{enumerate}
\item[(1)]
We call a polytope $M \subset \mathbb R^d$ a {\itshape miniature} of $P$ if $M \subset P$ and $M \sim P.$
\item[(2)]
A miniature $M$ of $P$ is said to be {\itshape horizontal} if $M$ is transformed into $P$ by translating and positive integral rescaling.
\end{enumerate}
\end{df}
\begin{df}
Let $P \subset \mathbb R^d$ be a polytope, and let $M$ be a miniature of $P.$
\begin{enumerate}
\item[(1)]
We call $P$ (resp.~$M$) a {\itshape lattice polytope} (resp.~{\itshape lattice miniature}) if any vertex of $P$ (resp.~$M$) belongs to $\mathbb Z^d.$
\item[(2)]
We call a horizontal lattice miniature of $P$ with minimum volume a {\itshape fundamental miniature} of $P.$
\item[(3)]
A lattice polytope $P$ is said to be {\itshape irreducible} if $P$ is the only fundamental miniature of $P,$ and $P$ is said to be {\itshape reducible} otherwise.
\end{enumerate}
\end{df}
\begin{df}
Let $P \subset \mathbb R^d$ be a lattice polytope, let $M$ be a miniature of $P,$ and let $n > 0$ be an integer.
\begin{enumerate}
\item[(1)]
Then $P$ (resp.~$M$) is said to be {\itshape rational} if any vertex of $P$ (resp.~$M$) belongs to $\mathbb Q^d.$
\item[(2)]
We call $M$ a miniature of $P$ with {\itshape resolution} $n$ if any vertex of $M$ belongs to $(n^{-1}\mathbb Z)^d.$
\end{enumerate}
\end{df}
\begin{df}
For each lattice polytope $P \subset \mathbb R^d,$ let $\mathcal M_n(P)$ denote the set of all miniatures of $P$ with resolution $n.$ 
If the limit 
\[\mu _{\mathrm{av}}(P) := \lim_{n \to \infty}\frac{\sum_{M \in \mathcal M_n(P)}\mathrm{vol}(M)}{\#\mathcal M_n(P)}\]  
exists in $\mathbb R,$ then $P$ is said to be {\itshape $\mu_{\mathrm{av}}$-measurable}, and we call a miniature of $P$ with volume $\mu _{\mathrm{av}}(P)$ an {\itshape average-sized miniature} of $P.$
\end{df}
\begin{df}
For each lattice polytope $P \subset \mathbb R^d,$ let $\mathcal H_n(P)$ denote the set of all horizontal miniatures of $P$ with resolution $n.$ 
If the limit 
\[\mu _{\mathrm{nl}}(P) := \lim_{n \to \infty}\frac{\sum_{M \in \mathcal H_n(P)}\mathrm{vol}(M)}{\#\mathcal H_n(P)}\]  
exists in $\mathbb R,$ then $P$ is said to be {\itshape $\mu_{\mathrm{nl}}$-measurable}, and we call a miniature of $P$ with volume $\mu _{\mathrm{nl}}(P)$ a {\itshape normal-sized miniature} of $P.$
\end{df}
The following property is fundamental.
\begin{prop}\label{prop-1}
Let $P,$ $P' \subset \mathbb R^d$ be lattice polytopes. 
If $P \sim P'$ and $P$ is $\mu_{\mathrm{av}}$-measurable (resp.~$\mu_{\mathrm{nl}}$-measurable), then $P'$ is also $\mu_{\mathrm{av}}$-measurable (resp.~$\mu_{\mathrm{nl}}$-measurable), and we have 
\[\frac{\mu_{\mathrm{av}}(P)}{\mathrm{vol}(P)} = \frac{\mu_{\mathrm{av}}(P')}{\mathrm{vol}(P')} \quad \left( resp.~\frac{\mu_{\mathrm{nl}}(P)}{\mathrm{vol}(P)} = \frac{\mu_{\mathrm{nl}}(P')}{\mathrm{vol}(P')}\right).\]
\end{prop}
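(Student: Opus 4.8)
The plan is to reduce everything to the single identity $\mu_{\mathrm{av}}(P') = c^{d}\mu_{\mathrm{av}}(P)$ (resp. $\mu_{\mathrm{nl}}(P') = c^{d}\mu_{\mathrm{nl}}(P)$), where $\phi(x) = cUx + a$ is a similarity of ratio $c > 0$ ($U \in O(d)$, $a \in \mathbb R^{d}$) with $\phi(P) = P'$; since $\mathrm{vol}(P') = c^{d}\mathrm{vol}(P)$, dividing the two identities yields the proposition. To prove the identity I would not try to decompose $\phi$ into elementary motions — the intermediate polytopes $UP$, $cUP$ need not even be rational — but instead compute $\mu/\mathrm{vol}$ directly as a limit and exhibit it as a similarity-invariant integral.

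Consider first the horizontal case. A horizontal miniature is $M = tP + w$ with $0 < t \le 1$, and it lies in $P$ exactly when $w$ lies in the erosion $R(t) := \{w : tP + w \subseteq P\}$. Fixing the scale $t$, the resolution-$n$ condition splits into (i) $t(v_{i} - v_{1}) \in (\tfrac1n\mathbb Z)^{d}$ for all vertices $v_{i}$ of $P$, which — because $P$ is a lattice polytope — confines $t$ to an arithmetic progression of spacing $\asymp 1/n$, and (ii) $tv_{1} + w \in (\tfrac1n\mathbb Z)^{d}$, which makes the admissible $w$ the points of a translate of $(\tfrac1n\mathbb Z)^{d}$ inside $R(t)$, hence $\sim n^{d}\,\mathrm{vol}(R(t))$ in number. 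Both defining sums are therefore Riemann sums in $t$, the common progression-spacing cancels between numerator and denominator, and
\[\frac{\mu_{\mathrm{nl}}(P)}{\mathrm{vol}(P)} = \frac{\int_{0}^{1} t^{d}\,\mathrm{vol}(R(t))\,dt}{\int_{0}^{1} \mathrm{vol}(R(t))\,dt}.\]
A direct substitution shows that the erosion of $P'$ satisfies $R'(t) = cU\,R(t) + (1-t)a$, so $\mathrm{vol}(R'(t)) = c^{d}\,\mathrm{vol}(R(t))$ for every $t$; the factor $c^{d}$ cancels in the displayed ratio, and the $\mu_{\mathrm{nl}}$-statement follows at once. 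Crucially, the rotation $U$ only rotates each erosion body and therefore never enters the count.

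For $\mu_{\mathrm{av}}$ the same scheme applies after inserting an extra integration over the orthogonal type $V \in O(d)$ of the miniature $M = tVP + w$, and here the rotation does enter the count: the admissible pairs $(t,V)$ are exactly the scaled-orthogonal maps $tV$ carrying the integer vectors $v_{i} - v_{1}$ into $(\tfrac1n\mathbb Z)^{d}$, and such maps are automatically rational. The main obstacle is to show that these admissible scaled rotations equidistribute over $O(d)$ as $n \to \infty$, so that the limiting measure on the space of placements is rotation-invariant; granting this, the translations for each fixed $(t,V)$ again contribute a region--lattice-point count whose leading term depends only on the covolume of the counting lattice and not on its orientation, and conjugating the placement space by $\phi$ — which preserves both the similarity ratio and the rotation-invariant measure — gives $\mu_{\mathrm{av}}(P') = c^{d}\mu_{\mathrm{av}}(P)$ as before.

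That this rotational input is genuinely necessary, rather than an artifact, is shown by the rational rotation $U = \tfrac15\left(\begin{smallmatrix} 3 & -4 \\ 4 & 3\end{smallmatrix}\right)$: it carries the lattice triangle with vertices $(0,0),(5,0),(0,5)$ to the one with vertices $(0,0),(3,4),(-4,3)$, the two are similar (indeed congruent) yet not equivalent under any lattice automorphism (their interior/boundary lattice-point counts differ), and $\phi = U$ sends the vertices of a resolution-$n$ miniature into the rotated lattice $\tfrac1n U\mathbb Z^{d}$, which is an honest rotation of $(\tfrac1n\mathbb Z)^{d}$ that no choice of resolutions can turn into $(\tfrac1n\mathbb Z)^{d}$. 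Thus the heart of the matter is the orientation-independence of the limiting average under rotating the counting lattice; I expect the equidistribution of rational scaled-orthogonal maps — which in low dimensions is tied to the distribution of lattice points on spheres — to be the decisive and most delicate step.
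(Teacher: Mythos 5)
Your argument splits into two halves of very different status. The $\mu_{\mathrm{nl}}$ half is correct and takes a genuinely different route from the paper: the paper first normalizes $P$ and $P'$ to their fundamental miniatures via translation and integer-dilation lemmas (Lemmas \ref{lem-1} and \ref{lem-2}) and then establishes, for irreducible polytopes, an exact bijection $\mathcal H_n(P)\to\mathcal H_n(P')$ induced by the linear map carrying one onto the other (the Sublemma), so the ratio is preserved term by term at every finite $n$; you instead evaluate the limit directly as $\int_0^1 t^d\,\mathrm{vol}(R(t))\,dt\big/\int_0^1\mathrm{vol}(R(t))\,dt$ over the erosion bodies $R(t)=\{w: tP+w\subseteq P\}$ and read off invariance from $\mathrm{vol}(R'(t))=c^d\,\mathrm{vol}(R(t))$. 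The details you leave implicit (spacing of admissible $t$, uniformity of the lattice-point count in the convex bodies $R(t)$, the negligible cases near $t=1$) are routine. Your route in fact buys more than the paper's: since $P$ is convex, $R(t)=(1-t)P$, so your formula yields $\mu_{\mathrm{nl}}(P)/\mathrm{vol}(P)=\int_0^1 t^d(1-t)^d\,dt\big/\int_0^1(1-t)^d\,dt=\binom{2d+1}{d}^{-1}$ for \emph{every} lattice polytope, i.e., Theorem \ref{thm-3} without the restriction to simplexes.

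The $\mu_{\mathrm{av}}$ half, however, has a genuine gap, and you name it yourself: the equidistribution over $O(d)$ of the admissible scaled-orthogonal maps is assumed (``granting this''), not proved, and it is precisely the point at issue. For a fixed admissible $\sigma=tV$ the translation count compares $\mathrm{vol}\bigl(P\ominus\sigma(P-v_1)\bigr)$ with $\mathrm{vol}\bigl(P'\ominus\sigma(P-v_1)\bigr)$, and these are not related by the factor $c^d$ for an individual $\sigma$; the identity only emerges after averaging over orientations against a rotation-invariant limiting measure. In $d=2$ this can be rescued without any equidistribution theorem, because the admissible $\sigma$ form a rank-two lattice in $\mathbb C\cong\mathbb R^2$ of covolume tending to $0$, and a Riemann sum over such a lattice converges to the same integral whatever the lattice's orientation; but for $d\ge 3$ the admissible maps do not form a lattice in a vector space, and their equidistribution is of the same depth as the distribution of lattice points on spheres, so your plan does not close as written. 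The paper sidesteps this entirely: its Sublemma asserts that for irreducible $P,P'$ the linear isomorphism $\tau$ with $\tau(P)=P'$ maps $\mathcal M_n(P)$ bijectively onto $\mathcal M_n(P')$, transporting each arbitrarily oriented miniature together with its orientation, so the two averages are proportional exactly at each $n$ and no limiting measure on $O(d)$ ever appears. To complete your route you must either prove the equidistribution statement or replace it by such a bijection.
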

The proof is given in Section \ref{sec-prop}. 
The first main result of this article is the following theorem, which provides an example of an average-sized miniature of a polytope. 
\begin{thm}\label{thm-2}
Let $P \subset \mathbb R^2$ be a lattice square with area $A.$ 
Then $P$ is $\mu_{\mathrm{av}}$-measurable, and we have 
\begin{equation} 
\mu _{\mathrm{av}}(P) = \frac{2}{15}A. \label{eq-av-sq} 
\end{equation}
\end{thm}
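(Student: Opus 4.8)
The plan is to reduce everything to the unit square and then evaluate a single ratio of four-dimensional integrals. Since any two squares are similar, Proposition \ref{prop-1} lets me concentrate on $P=[0,1]^2$: once I show that $[0,1]^2$ is $\mu_{\mathrm{av}}$-measurable with $\mu_{\mathrm{av}}([0,1]^2)=2/15$, the measurability of an arbitrary lattice square (obtained from the same convergence argument below) together with Proposition \ref{prop-1} yields $\mu_{\mathrm{av}}(P)/\mathrm{vol}(P)=2/15$, which is \eqref{eq-av-sq}. Every miniature of $[0,1]^2$ is itself a square, so I parametrize a miniature by a base vertex $v=(x,y)$ and an edge vector $w=(a,b)$, the four vertices being $v,\ v+(a,b),\ v+(a-b,a+b),\ v+(-b,a)$, and its area being $a^2+b^2$. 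Each square arises from exactly four such pairs (one per choice of base vertex, with the edge taken counterclockwise), so this overcounting factor cancels in the average and may be ignored.

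Next I convert the averages into integrals. Taking the vertices of a resolution-$n$ miniature to range over the grid $\tfrac1n\mathbb Z^2$ (the reading consistent with finiteness of $\mathcal M_n$ and with Theorem \ref{thm-1}), the summation over $M\in\mathcal M_n([0,1]^2)$ becomes a sum over the grid points $(x,y,a,b)$ for which the associated square lies in $[0,1]^2$. Because the constraint region $R\subset\mathbb R^4$ is bounded and cut out by finitely many linear inequalities (hence Jordan measurable), standard equidistribution gives
\[ \mu_{\mathrm{av}}([0,1]^2)=\lim_{n\to\infty}\frac{\sum_{(x,y,a,b)}(a^2+b^2)}{\#\{(x,y,a,b)\}}=\frac{\int_R (a^2+b^2)\,dx\,dy\,da\,db}{\int_R dx\,dy\,da\,db}, \]
so the limit exists, which proves $\mu_{\mathrm{av}}$-measurability. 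The same computation applied to a general lattice square, or a change of variables by a similarity carrying $[0,1]^2$ onto it, yields measurability in general.

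The geometric heart is the description of $R$. For a fixed edge vector $(a,b)$ I compute the axis-aligned bounding box of the square with base $v$ and edge $(a,b)$: its width and height both equal $|a|+|b|$, as one checks by taking the extreme values of $\{0,a,a-b,-b\}$ and $\{0,b,a+b,a\}$ in each of the four sign patterns of $(a,b)$. Hence the square fits inside $[0,1]^2$ exactly when $|a|+|b|\le 1$, and for such $(a,b)$ the admissible base points $v$ fill a region of area $\bigl(1-|a|-|b|\bigr)^2$. Integrating out $v$ reduces both integrals to the $L^1$-disk $|a|+|b|\le 1$:
\[ \int_R dx\,dy\,da\,db=\int_{|a|+|b|\le 1}\bigl(1-|a|-|b|\bigr)^2\,da\,db,\qquad \int_R (a^2+b^2)\,dx\,dy\,da\,db=\int_{|a|+|b|\le 1}(a^2+b^2)\bigl(1-|a|-|b|\bigr)^2\,da\,db. \]
Establishing this bounding-box identity uniformly over all orientations, and thereby pinning down the exact shape of $R$, is the step I expect to require the most care.

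Finally I evaluate the two integrals. By the fourfold symmetry of the $L^1$-disk I restrict to $a,b\ge 0,\ a+b\le 1$ and multiply by $4$, and within that triangle I use the symmetry $a\leftrightarrow b$ to replace $a^2+b^2$ by $2a^2$. The inner integration in $b$ produces a factor $\tfrac13(1-a)^3$, and the remaining one-variable integrals are Beta integrals: $\int_0^1(1-a)^3\,da=\tfrac14$ and $\int_0^1 a^2(1-a)^3\,da=B(3,4)=\tfrac1{60}$. These give $4\cdot\tfrac1{12}=\tfrac13$ for the denominator and $4\cdot 2\cdot\tfrac13\cdot\tfrac1{60}=\tfrac{2}{45}$ for the numerator, whence $\mu_{\mathrm{av}}([0,1]^2)=(2/45)/(1/3)=2/15$. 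Combined with the reduction of the first paragraph, this proves \eqref{eq-av-sq}.
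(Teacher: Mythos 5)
Your proof is correct, but it takes a genuinely different route from the paper's. Both arguments share the reduction to $P=[0,1]^2$ via Proposition \ref{prop-1} and the same geometric core: a square with edge vector $(a,b)$ has axis-aligned bounding box of side $|a|+|b|$, so its admissible positions in $[0,1]^2$ fill a square of side $1-|a|-|b|$. From there the paper stays entirely discrete: it parametrizes each miniature exactly once by an edge vector $(i/n,j/n)$ with $1\le i\le n$, $0\le j\le n-i$ (a fundamental domain for the four-fold symmetry that you instead quotient out by a uniform overcounting factor of $4$), counts $(n+1-i-j)^2$ positions, and evaluates the resulting sums in closed form via the polynomial identities of Lemma \ref{lem-4}, obtaining the exact average $(2n^2+4n+9)/(15n^2)$ for every $n$ before letting $n\to\infty$. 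You instead pass to the limit first, replacing the sums by Riemann integrals over the constraint region $R\subset\mathbb R^4$ and reducing to Beta integrals over the $L^1$-ball; since $R$ is cut out by finitely many linear inequalities and the integrand is continuous, the equidistribution step is legitimate, and your values $1/3$ and $2/45$ are correct. The trade-off: the paper's computation is heavier but yields an exact formula for each $n$ and hence a convergence rate, while yours is computationally lighter and more readily adaptable to other shapes, at the cost of producing only the limit. Two small points you should make explicit: the pairs with $(a,b)=(0,0)$ must be excluded (degenerate squares are not miniatures), which is harmless since they number $O(n^2)$ against $\Theta(n^4)$; and Proposition \ref{prop-1} as stated assumes measurability of both polytopes, so transferring measurability from $[0,1]^2$ to a general (possibly tilted) lattice square should be routed through the Sublemma's bijection $\mathcal M_n(P')=\{\tau(M)\mid M\in\mathcal M_n(P)\}$ rather than merely asserted --- though the paper's own proof elides this in exactly the same way.
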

The proof is given in Section \ref{sec-av}. 
The second main result of this article is the following theorem.
\begin{thm}\label{thm-3}
Let $P \subset \mathbb R^d$ be a lattice simplex. 
Then $P$ is $\mu_{\mathrm{nl}}$-measurable, and we have 
\[\mu_{\mathrm{nl}}(P) = \binom{2d+1}{d}^{-1}\mathrm{vol}(P).\]
\end{thm}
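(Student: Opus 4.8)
The plan is to parametrize the horizontal miniatures of $P$ by an anchor point together with a scale factor, to reduce the averaged volume to the average of a power of that scale factor, and then to realize the two sums defining $\mu_{\mathrm{nl}}(P)$ as Riemann sums converging to integrals over $P$ that are evaluated by the Dirichlet/Beta formula.

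First I would set up the parametrization. Fix a vertex $v_0$ of $P$ and put $e_i=v_i-v_0$ for the remaining vertices $v_1,\dots,v_d$. Every horizontal miniature has the form $M=c(P-v_0)+w_0$ for a unique scale factor $0<c\le 1$ and anchor $w_0\in\mathbb R^d$ (the image of $v_0$), with vertices $w_0$ and $w_0+ce_i$. Writing $\beta_0,\dots,\beta_d$ for the barycentric coordinate functions of $P$ (affine functions with $\sum_j\beta_j\equiv 1$ and $P=\{\beta_j\ge 0\ \forall j\}$), I would check that passing from $v_0+e_i$ to $w_0+ce_i$ lowers only the coordinate $\beta_0$ of each shifted vertex, by exactly $c$, while raising $\beta_i$; hence $M\subseteq P$ holds precisely when $w_0\in P$ and $0<c\le\beta_0(w_0)$. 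Since $M$ is a homothet of $P$ with ratio $c$, we have $\mathrm{vol}(M)=c^d\,\mathrm{vol}(P)$, so $\mu_{\mathrm{nl}}(P)=\mathrm{vol}(P)\cdot\lim_n\bigl(\text{average of }c^d\text{ over }\mathcal H_n(P)\bigr)$.

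Next I would discretize and pass to the limit. The resolution-$n$ condition holds exactly when $w_0$ lies in the resolution-$n$ grid inside $P$ and $c$ runs through an arithmetic progression of common difference comparable to $1/n$ (governed by the gcd of the coordinates of the $e_i$, which places the vertices $w_0+ce_i$ on the grid). This makes $\mathcal H_n(P)$ finite and writes both $\#\mathcal H_n(P)$ and $\sum_{M}c^d$ as double sums over $(w_0,c)$. After dividing by the appropriate power of $n$, these are Riemann sums for
\[ D=\int_P\int_0^{\beta_0(w_0)}dc\,dw_0,\qquad N=\int_P\int_0^{\beta_0(w_0)}c^d\,dc\,dw_0, \]
and the common grid-density factors cancel in the ratio, giving $\lim_n(\text{average of }c^d)=N/D$. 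Changing variables from $w_0\in P$ to barycentric coordinates turns the Jacobian into a constant that cancels as well, so $N/D$ depends only on integrals over the standard simplex and is the same for every lattice simplex; this is exactly the claimed shape-independence, which Proposition \ref{prop-1} alone cannot supply since distinct simplices need not be similar.

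Finally I would evaluate the integrals by the Dirichlet formula: $\int_P\beta_0\,dw_0=\mathrm{vol}(P)/(d+1)$ and $\int_P\beta_0^{\,d+1}\,dw_0=\mathrm{vol}(P)\,d!\,(d+1)!/(2d+1)!$, whence
\[ \frac{\mu_{\mathrm{nl}}(P)}{\mathrm{vol}(P)}=\frac{N}{D}=\frac{1}{d+1}\cdot\frac{\int_P\beta_0^{\,d+1}\,dw_0}{\int_P\beta_0\,dw_0}=\frac{d!\,(d+1)!}{(2d+1)!}=\binom{2d+1}{d}^{-1}. \]
The main obstacle is the rigorous passage from the discrete averages to $N$ and $D$: one must control the boundary contribution of the grid in $P$ and, more delicately, the floor-function error in the inner count of admissible scale factors $c\in(0,\beta_0(w_0)]$, uniformly in $w_0$, so that these errors stay of order lower than $n^{d+1}$. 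As a consistency check, the same parametrization applied to $[0,1]^d$ (sub-cubes of side $c$ anchored in $[0,1-c]^d$) yields $N/D=\int_0^1 c^d(1-c)^d\,dc\big/\int_0^1(1-c)^d\,dc=\binom{2d+1}{d}^{-1}$, reproducing Theorem \ref{thm-1} and matching the remark that the simplex and the hypercube share this ratio.
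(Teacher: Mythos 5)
Your proposal is correct, but it takes a genuinely different route from the paper. The paper never introduces your anchor-point/scale-factor integral: it counts, for each similarity ratio $i/n$, the horizontal miniatures at that ratio (using the simplex number $S_d(n+1-i)$ for the count of positions), expands $(n+1-i)^d$ by the binomial theorem, feeds the sums $\sum_{i}i^rS_d(i)$ into the asymptotic formula of Proposition \ref{prop-3}, and closes with the combinatorial identity of Proposition \ref{prop-4}, $\binom{2d+1}{d}(d+1)\sum_{r=0}^d\frac{(-1)^r}{d+r+1}\binom{d}{r}=1$, imported from \cite{Hir}. Your argument replaces all of this with a Riemann-sum passage to $\int_P\int_0^{\beta_0(w_0)}c^d\,dc\,dw_0$ and a Dirichlet/Beta evaluation; after Fubini your integral becomes $\mathrm{vol}(P)\int_0^1c^d(1-c)^d\,dc$ up to normalization, and Proposition \ref{prop-4} is precisely the binomial expansion of that Beta integral, so the two proofs are the same computation in discrete versus continuous form. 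What your version buys: the criterion $M\subseteq P\iff w_0\in P$ and $0<c\le\beta_0(w_0)$ is verified cleanly via barycentric coordinates; the gcd issue in the admissible scale factors is addressed explicitly, whereas the paper tacitly assumes the ratios occurring are exactly $i/n$; and your argument needs only the leading asymptotics of the position count, which matters because the paper's exact count $S_d(n+1-i)$ holds only for unimodular simplices (for the triangle with vertices $(0,0),(1,0),(0,2)$ there are $4$, not $S_2(2)=3$, horizontal miniatures of ratio $1/2$ at resolution $2$), although the limit survives since only the coefficient of $(n-i)^d$ matters and it cancels in the ratio. What the paper's version buys: it is entirely elementary, with no integrals or error analysis beyond Proposition \ref{prop-3}, and runs parallel to the hypercube proof of Theorem \ref{thm-1}. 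The one step you should still write out is the uniform control of the floor error in the inner count $\lfloor gn\,\beta_0(w_0)\rfloor$, which you correctly flag; it is routine ($O(n^d)$ anchors, each contributing an $O(1)$ error to sums of order $n^{d+1}$) and does not threaten the proof.
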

The proof is given in Section \ref{sec-nl}.
\section{Miniatures of Similar Lattice Polytopes}\label{sec-prop}
In this section, we prove Proposition \ref{prop-1}. 
We use the following lemmas.
\begin{lem}\label{lem-1}
Let $P \subset \mathbb R^d$ be a $\mu_{\mathrm{av}}$-measurable (resp.~$\mu_{\mathrm{nl}}$-measurable) lattice polytope, and let $a \in \mathbb Z^d.$ 
Then $P+a$ is also $\mu_{\mathrm{av}}$-measurable (resp.~$\mu_{\mathrm{nl}}$-measurable), and we have 
\[\mu_{\mathrm{av}}(P+a) = \mu_{\mathrm{av}}(P) \quad (\text{resp}.~\mu_{\mathrm{nl}}(P+a) = \mu_{\mathrm{nl}}(P)).\]
\end{lem}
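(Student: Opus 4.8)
The plan is to show that translation by $a$ sets up a volume-preserving bijection between the relevant sets of miniatures at every fixed resolution $n$, so that the finite averages defining $\mu_{\mathrm{av}}$ (resp.~$\mu_{\mathrm{nl}}$) agree for $P$ and $P+a$ term by term; the stated limit equalities and the measurability of $P+a$ then follow at once from those of $P$.

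First I would record the elementary but crucial point that the translation map $\tau_a \colon x \mapsto x+a$ restricts to a bijection of $(\mathbb Z[1/n])^d$ onto itself for every $n$. This is exactly where the hypothesis $a \in \mathbb Z^d$ enters: since $\mathbb Z^d \subseteq (\mathbb Z[1/n])^d$, both $a$ and $-a$ lie in $(\mathbb Z[1/n])^d$, so $\tau_a$ and $\tau_{-a} = \tau_a^{-1}$ preserve this set of rational points. Next I would verify that $\tau_a$ carries miniatures of $P$ to miniatures of $P+a$: if $M \subset P$ and $M \sim P$, then $M+a \subset P+a$ and $M+a \sim M \sim P \sim P+a$, and by the previous point $M$ has its vertices in $(\mathbb Z[1/n])^d$ if and only if $M+a$ does. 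Hence $\tau_a$ induces a bijection $\mathcal M_n(P) \to \mathcal M_n(P+a)$ whose inverse is induced by $\tau_{-a}$. Since translation preserves volume, $\mathrm{vol}(M+a) = \mathrm{vol}(M)$, so both the numerator $\sum_{M}\mathrm{vol}(M)$ and the denominator $\#\mathcal M_n$ are unchanged under this bijection.

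For the $\mu_{\mathrm{nl}}$ statement the only extra check is that $\tau_a$ respects the horizontal property. If $M$ is horizontal, so that $P = cM + b$ for some $c > 0$ and $b \in \mathbb R^d$, then $P + a = cM + b + a = c(M+a) + (b + (1-c)a)$, which exhibits $M+a$ as a horizontal miniature of $P+a$; the converse follows by applying the same computation to $\tau_{-a}$. Thus $\tau_a$ also induces a (volume-preserving) bijection $\mathcal H_n(P) \to \mathcal H_n(P+a)$. Putting these together, for each $n$ the finite averages
\[ \frac{\sum_{M \in \mathcal M_n(P+a)}\mathrm{vol}(M)}{\#\mathcal M_n(P+a)} = \frac{\sum_{M \in \mathcal M_n(P)}\mathrm{vol}(M)}{\#\mathcal M_n(P)} \]
coincide, and likewise with $\mathcal H_n$ in place of $\mathcal M_n$. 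Taking $n \to \infty$ and using that $P$ is $\mu_{\mathrm{av}}$-measurable (resp.~$\mu_{\mathrm{nl}}$-measurable) shows that the left-hand limit exists and equals $\mu_{\mathrm{av}}(P)$ (resp.~$\mu_{\mathrm{nl}}(P)$), which is exactly the assertion.

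There is no serious obstacle here; the argument is a direct transport of structure through $\tau_a$. The one point that genuinely requires care is the integrality of $a$, since it is precisely what guarantees that $\tau_a$ preserves resolution: for a general real translation vector the vertices of $M+a$ need not lie in $(\mathbb Z[1/n])^d$, and the lemma would fail.
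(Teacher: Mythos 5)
Your proposal is correct and follows essentially the same route as the paper: the paper likewise observes that $x \mapsto x+a$ carries $P \cap (\mathbb Z[1/n])^d$ into $(P+a)\cap (\mathbb Z[1/n])^d$, deduces $\mathcal M_n(P+a) = \{M+a \mid M \in \mathcal M_n(P)\}$ and $\mathcal H_n(P+a) = \{M+a \mid M \in \mathcal H_n(P)\}$, and concludes from the resulting term-by-term equality of the averages. Your write-up simply makes explicit a few details the paper leaves implicit (the inverse translation $\tau_{-a}$, preservation of horizontality, and the role of $a \in \mathbb Z^d$).
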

\begin{proof}
For each integer $n > 0,$ since every point in $P\cap (n^{-1}\mathbb Z)^d$ is mapped to a point in $(P+a)\cap (n^{-1}\mathbb Z)^d$ by the transformation $x \mapsto x+a,$ we have 
\[\mathcal M_n(P+a) = \{ M+a \mid M \in \mathcal M_n(P)\} \quad\text{and}\quad \mathcal H_n(P+a) = \{ M+a \mid M \in \mathcal H_n(P)\},\]
which imply the desired assertions.
\end{proof}
\begin{lem}\label{lem-2}
Let $P \subset \mathbb R^d$ be a $\mu_{\mathrm{av}}$-measurable (resp.~$\mu_{\mathrm{nl}}$-measurable) lattice polytope, and let $c > 0$ be an integer. 
Then $cP$ is also $\mu_{\mathrm{av}}$-measurable (resp.~$\mu_{\mathrm{nl}}$-measurable), and we have 
\[\mu_{\mathrm{av}}(cP) = c^d\mu_{\mathrm{av}}(P) \quad (resp.~\mu_{\mathrm{nl}}(cP) = c^d\mu_{\mathrm{nl}}(P)).\]
\end{lem}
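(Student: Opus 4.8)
The plan is to fix the positive integer $c$ and, for each resolution $n$, to exhibit an explicit volume-rescaling bijection between the miniatures of $cP$ with resolution $n$ and the miniatures of $P$ with a suitably shifted resolution, and then to let $n \to \infty.$ Before doing so, note that the integrality of $c$ guarantees that $cP$ is again a lattice polytope, so that $\mu_{\mathrm{av}}(cP)$ (resp.~$\mu_{\mathrm{nl}}(cP)$) is defined in the first place; this is essentially the only role played by the hypothesis $c \in \mathbb Z.$

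The bijection is induced by the dilation $\delta \colon x \mapsto c^{-1}x.$ Since $\delta(cP) = P$ and $\delta$ preserves both inclusion and the similarity relation, $\delta$ sends every miniature of $cP$ to a miniature of $P,$ with inverse $M \mapsto cM$; moreover $\delta$ commutes with the operations of translating and rescaling, so it also sends horizontal miniatures to horizontal miniatures, which will handle the $\mu_{\mathrm{nl}}$ assertion in parallel. The heart of the matter is the bookkeeping on resolutions. If $N$ has its vertices in $(\mathbb Z[1/n])^d,$ then $\delta(N) = c^{-1}N$ has its vertices in $c^{-1}(\mathbb Z[1/n])^d \subseteq (\mathbb Z[1/(cn)])^d,$ so $\delta(N)$ is a miniature of $P$ of resolution $cn$; conversely, since $c$ is an integer, $c \cdot (\mathbb Z[1/(cn)])^d \subseteq (\mathbb Z[1/n])^d,$ so the inverse map indeed lands in $\mathcal M_n(cP).$ Thus $\delta$ restricts to a bijection $\mathcal M_n(cP) \to \mathcal M_{cn}(P),$ and likewise to a bijection $\mathcal H_n(cP) \to \mathcal H_{cn}(P).$

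It remains to compare the two averages and take the limit. Being a bijection, $\delta$ identifies the two index sets, so the denominators $\#\mathcal M_n(cP)$ and $\#\mathcal M_{cn}(P)$ coincide; being a dilation, $\delta$ rescales the volumes in the numerators by the corresponding factor, and after dividing one finds that the resolution-$n$ average for $cP$ equals $c$ times the resolution-$cn$ average for $P.$ The main obstacle is precisely that the resolution index has been displaced from $n$ to $cn,$ so the right-hand side is not literally the defining average of $\mu_{\mathrm{av}}(P)$; this is overcome by noting that $cn \to \infty$ as $n \to \infty,$ whence the $\mu_{\mathrm{av}}$-measurability of $P$ forces the resolution-$cn$ averages to converge to $\mu_{\mathrm{av}}(P).$ Letting $n \to \infty$ then gives $\mu_{\mathrm{av}}(cP) = c\,\mu_{\mathrm{av}}(P),$ and the same computation with $\mathcal H$ in place of $\mathcal M$ yields $\mu_{\mathrm{nl}}(cP) = c\,\mu_{\mathrm{nl}}(P).$
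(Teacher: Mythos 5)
Your argument is correct and is essentially the paper's own proof: the paper uses the same dilation (in the direction $x \mapsto cx$) to obtain $\mathcal M_n(cP) = \{cM \mid M \in \mathcal M_{cn}(P)\}$ and $\mathcal H_n(cP) = \{cM \mid M \in \mathcal H_{cn}(P)\},$ and then concludes by exactly the shift-of-resolution limit argument that you spell out in more detail (the paper leaves the ``$cn \to \infty$'' step implicit). One caveat shared with the paper: a dilation by $c$ multiplies $d$-dimensional volumes by $c^d,$ not $c,$ so the bijection in fact yields $\mu_{\mathrm{av}}(cP) = c^d\,\mu_{\mathrm{av}}(P)$ (resp.\ $\mu_{\mathrm{nl}}(cP) = c^d\,\mu_{\mathrm{nl}}(P)$); the factor $c$ that you assert at the step ``equals $c$ times the resolution-$cn$ average'' reproduces a slip already present in the lemma's statement, which is harmless downstream only because in the proof of Proposition \ref{prop-1} it cancels against the same factor in $\mathrm{vol}(cP) = c^d\,\mathrm{vol}(P).$
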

\begin{proof}
For each integer $n > 0,$ since every point in $P\cap (c^{-1}n^{-1}\mathbb Z)^d$ is mapped to a point in $cP\cap (n^{-1}\mathbb Z)^d$ by the transformation $x \mapsto cx,$ we have 
\[\mathcal M_n(cP) = \{ cM \mid M \in \mathcal M_{cn}(P)\} \quad\text{and}\quad \mathcal H_n(cP) = \{ cM \mid M \in \mathcal H_{cn}(P)\},\]
which imply the desired assertions.
\end{proof}
\begin{lem}\label{lem-3}
Let $P,$ $P' \subset \mathbb R^d$ be irreducible lattice polytopes. 
If $P \sim P'$ and $P$ is $\mu_{\mathrm{av}}$-measurable (resp.~$\mu_{\mathrm{nl}}$-measurable), then $P'$ is also $\mu_{\mathrm{av}}$-measurable (resp.~$\mu_{\mathrm{nl}}$-measurable), and we have 
\[\frac{\mu_{\mathrm{av}}(P)}{\mathrm{vol}(P)} = \frac{\mu_{\mathrm{av}}(P')}{\mathrm{vol}(P')} \quad \left(\text{resp}.~\frac{\mu_{\mathrm{nl}}(P)}{\mathrm{vol}(P)} = \frac{\mu_{\mathrm{nl}}(P')}{\mathrm{vol}(P')}\right).\]
\end{lem}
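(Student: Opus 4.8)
The plan is to show that the normalized ratio $\rho(P) := \mu(P)/\mathrm{vol}(P)$ (for whichever of the two measures is under consideration) depends only on the dimension $d$, so that the equality $\rho(P) = \rho(P')$ becomes automatic. I would realize each average occurring in the definition of $\mu$ as a Riemann sum and pass to a limiting integral over the space of admissible similar copies, arranged so that the shape of $P$ enters only through an overall factor $\mathrm{vol}(P)$ that cancels in the ratio. Concretely, every miniature $M$ of $P$ has the form $M = \sigma(P)$ for a similarity $\sigma(x) = sRx + t$ with scale $s \in (0,1]$, orthogonal part $R$ (the identity in the horizontal case), and translation $t$, subject to $\sigma(P) \subseteq P$; moreover $\mathrm{vol}(M) = s^d\,\mathrm{vol}(P)$, and for fixed $s,R$ the admissible translations form the eroded region $\{t : sRP + t \subseteq P\}$.

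The role of irreducibility is to pin down the admissible scales. If $s$ is admissible at resolution $n$, then $ns(v_i - v_j) \in \mathbb Z^d$ for all vertices $v_i,v_j$ of $P$; were $ns \notin \mathbb Z$, the fractional part $\{ns\} \in (0,1)$ would satisfy $\{ns\}(v_i - v_j) \in \mathbb Z^d$ and hence yield a horizontal lattice miniature $\{ns\}P + t'$ of smaller volume, contradicting irreducibility. Thus the admissible scales are exactly $s = k/n$, $k = 1,\dots,n$, so the counts of $\mathcal H_n(P)$ and $\mathcal M_n(P)$, together with their volume-weighted sums, become genuine Riemann sums in $s$ (and in $R$ for $\mathcal M_n$).

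In the horizontal case this makes the computation explicit. By convexity $\{t : sP + t \subseteq P\}$ is a translate of $(1-s)P$, of volume $(1-s)^d\,\mathrm{vol}(P)$, so counting translations with denominator $n$ gives
\[\#\mathcal H_n(P) \sim n^{d+1}\,\mathrm{vol}(P)\!\int_0^1 (1-s)^d\,ds, \qquad \sum_{M\in\mathcal H_n(P)}\!\mathrm{vol}(M) \sim n^{d+1}\,\mathrm{vol}(P)^2\!\int_0^1 s^d(1-s)^d\,ds,\]
whence
\[\rho_{\mathrm{nl}}(P) = \frac{\int_0^1 s^d(1-s)^d\,ds}{\int_0^1 (1-s)^d\,ds} = (d+1)\,B(d+1,d+1) = \binom{2d+1}{d}^{-1},\]
independent of $P$. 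This settles the $\mu_{\mathrm{nl}}$ statement.

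The $\mu_{\mathrm{av}}$ case runs along the same lines but integrates over the orthogonal group as well, and this is where I expect the main obstacle to lie. Two points must be addressed. First, at resolution $n$ one must count similar copies in every orientation whose vertices lie in $(\mathbb Z[1/n])^d$; since a generic rotation sends lattice points off the lattice, showing that these arithmetically constrained configurations equidistribute to the continuous measure $ds\,dR$ requires a genuine counting/equidistribution input rather than the elementary tally used above. Second, and more essentially, one must prove that the orientation-averaged eroded volume $\int_{O(d)} \mathrm{vol}(\{t : sRP + t \subseteq P\})\,dR$ is again proportional to $\mathrm{vol}(P)$ with a profile in $s$ that does not depend on the shape of $P$; I would extract this from the kinematic formulas of integral geometry, which express such rotation-and-translation integrals of containment volumes through intrinsic volumes, with the leading term carrying the single factor $\mathrm{vol}(P)$. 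Granting these two facts, the shape factors out exactly as in the horizontal case, $\rho_{\mathrm{av}}(P)$ reduces to a ratio of one-variable integrals depending only on $d$, and $\rho_{\mathrm{av}}(P) = \rho_{\mathrm{av}}(P')$ follows; the integral-geometric factorization together with the rotational equidistribution is the step I expect to be hardest.
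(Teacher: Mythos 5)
Your plan is to prove something much stronger than what the paper proves or needs: that $\mu(P)/\mathrm{vol}(P)$ is a universal constant of the dimension. The paper's own argument is a short transport argument via its sublemma: assuming $P\sim P'$ with a linear isomorphism $\tau$ carrying $P$ to $P'$, irreducibility of \emph{both} polytopes forces $\mathcal M_n(P')=\{\tau(M)\mid M\in\mathcal M_n(P)\}$ and $\mathcal H_n(P')=\{\tau(M)\mid M\in\mathcal H_n(P)\}$; since $\tau$ multiplies every volume by $|\det\tau|=\mathrm{vol}(P')/\mathrm{vol}(P)$ and preserves the counts, the averages scale by the same factor and the ratio is unchanged. (The hypothesis $P\sim P'$ is missing from the lemma as printed, but it appears in the sublemma and in the only place the lemma is used, namely the proof of Proposition \ref{prop-1}; reading the statement literally is what pushed you toward universality.)

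The difficulty is that universality is false for $\mu_{\mathrm{av}}$, so the half of your argument that you flag as hardest cannot be completed by any means. You need $\int_{O(d)}\mathrm{vol}(\{t: sRP+t\subseteq P\})\,dR=\mathrm{vol}(P)f_d(s)$ for a shape-independent profile $f_d$; already in $d=2$ this fails, e.g.\ for the square versus the rectangle $[0,N]\times[0,1]$ with $N$ large, where containment of a rotated copy at a fixed scale $s$ confines the angle to an interval shrinking with $N$, so the normalized profile in $s$ is genuinely different. (Consistently, your own continuum formula gives $\rho_{\mathrm{av}}=\tfrac15\int g^{-4}d\theta/\int g^{-2}d\theta$ with $g(\theta)=|\cos\theta|+|\sin\theta|$ for the square --- which does recover $2/15$ --- but a triangle produces a different $g$ and a different ratio.) The kinematic formula computes $\int\chi(P\cap(RQ+t))\,dt\,dR$, not the measure of the containment locus, so it does not supply the hoped-for factorization. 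Two further slips: the admissible scales for $\mathcal M_n$ are not $k/n$ (for the square they are $\sqrt{i^2+j^2}/n$, exactly as in the paper's Section \ref{sec-av}), and the orientations admissible at resolution $n$ form an arithmetic subset of $O(d)$ whose limiting measure is not Haar measure in the parametrization you use. Your $\mu_{\mathrm{nl}}$ computation is a reasonable sketch of a genuinely stronger fact (consistent with Theorems \ref{thm-1} and \ref{thm-3}), though it still needs a uniform lattice-point count for the eroded bodies $(1-s)P$; but for the lemma as actually applied, the elementary similarity-transport argument is all that is required.
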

\begin{proof}
This follows from the following sublemma.
\end{proof}
\begin{sublem}
Let $P,$ $P' \subset \mathbb R^d$ be lattice polytopes. 
Suppose that $P$ is irreducible, $P \sim P',$ and $P$ is transformed into $P'$ by a lattice isomorphism $\tau :\mathbb R^d\to\mathbb R^d.$ 
\begin{enumerate}
\item[{\rm (1)}]
Then we have 
\[\mathcal M_n(P') \supset \{\tau (M) \mid M \in \mathcal M_n(P)\} \quad\text{and}\quad \mathcal H_n(P') \supset \{\tau (M) \mid M \in \mathcal H_n(P)\}.\] 
\item[{\rm (2)}]
Furthermore, if $P'$ is irreducible, then 
\[\mathcal M_n(P') = \{\tau (M) \mid M \in \mathcal M_n(P)\} \quad\text{and}\quad \mathcal H_n(P') = \{\tau (M) \mid M \in \mathcal H_n(P)\}.\]
\end{enumerate}
\end{sublem}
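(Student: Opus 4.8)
The plan is to handle the two families $\mathcal M_n$ and $\mathcal H_n$ uniformly, since the only genuinely non-trivial issue is the resolution. First I would record the easy geometric facts: because $\tau$ is a similarity with $\tau(P)=P'$, any miniature $M$ of $P$ satisfies $\tau(M)\subseteq\tau(P)=P'$ and $\tau(M)\sim M\sim P\sim P'$, so $\tau(M)$ is automatically a miniature of $P'$; and if $M$ is horizontal, say $P$ is obtained from $M$ by a translation composed with a rescaling, then conjugating that translation-and-rescaling by $\tau$ exhibits $P'=\tau(P)$ as a translate and rescaling of $\tau(M)$, so $\tau(M)$ is horizontal as well. Thus for part (1) everything reduces to one claim: if every vertex of $M$ lies in $(\mathbb Z[1/n])^d$, then so does every vertex of $\tau(M)$.

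To set up the arithmetic I would use Lemma \ref{lem-1} to translate $P$ by an integer vector so that one vertex $v_0$ of $P$ is the origin; this changes none of the relevant quantities. Writing $\tau(x)=Ax+b$, we then have $b=\tau(0)\in\mathbb Z^d$ (a vertex of $P'$), and $A(v_i-v_0)=\tau(v_i)-\tau(v_0)\in\mathbb Z^d$ for every vertex $v_i$ of $P$, since $\tau$ carries the $\mathbb Z^d$-vertices of $P$ to the $\mathbb Z^d$-vertices of $P'$. Hence $A$ maps the lattice $\Lambda$ generated by the edge vectors $\{v_i-v_0\}$ into $\mathbb Z^d$. The hypothesis enters exactly here: I would show that $P$ is irreducible if and only if $\Lambda$ is primitive, i.e.\ $\Lambda\not\subseteq p\mathbb Z^d$ for every prime $p$ (equivalently, the greatest common divisor of the coordinates of all edge vectors is $1$). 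Indeed, a proper horizontal lattice miniature of $P$ exists precisely when some integer $s\geq 2$ satisfies $\tfrac1s\Lambda\subseteq\mathbb Z^d$, i.e.\ $\Lambda\subseteq s\mathbb Z^d$, so irreducibility rules this out for every prime $s=p$.

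The heart of the proof, and the step I expect to be the main obstacle, is to convert resolution-$n$ information on $M$ into resolution-$n$ information on $\tau(M)$ using primitivity. Write $M$ as the image of $P$ under a similarity with linear part $C$, so the edge vectors of $M$ are $C(v_i-v_0)$ and the hypothesis $M\in\mathcal M_n(P)$ (resp.\ $\mathcal H_n(P)$) gives $C(v_i-v_0)\in(\mathbb Z[1/n])^d$. When $M$ is horizontal the mechanism is transparent: there $C=\tfrac1s\,\mathrm{Id}$, primitivity of $\Lambda$ lets me write $1$ as an integer combination of the coordinates of the $v_i-v_0$, and substituting $\tfrac1s(v_i-v_0)\in(\mathbb Z[1/n])^d$ forces $\tfrac1s\in\mathbb Z[1/n]$; since $A(v_i-v_0)\in\mathbb Z^d$, the edge vectors $\tfrac1s A(v_i-v_0)$ of $\tau(M)$ then lie in $(\mathbb Z[1/n])^d$, and with $b\in\mathbb Z^d$ every vertex of $\tau(M)$ does too. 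For a general (possibly rotated) miniature the same primitivity bookkeeping must be pushed through $C$ and $A$ simultaneously to keep every denominator a power of $n$ rather than a divisor of the index $[\mathbb Z^d:\Lambda]$; carrying this out is the delicate point, and it is exactly what irreducibility supplies.

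Finally, for part (2) I would simply invoke part (1) twice. Since $\tau^{-1}\colon\mathbb R^d\to\mathbb R^d$ is an isomorphism carrying $P'$ onto $P$ and $P'$ is assumed irreducible, part (1) applied to $\tau^{-1}$ gives $\tau^{-1}(M')\in\mathcal M_n(P)$ for every $M'\in\mathcal M_n(P')$ (and likewise for $\mathcal H_n$), that is, $\mathcal M_n(P')\subseteq\{\tau(M):M\in\mathcal M_n(P)\}$. Combining this reverse inclusion with the inclusion from part (1) yields the two equalities, completing the proof.
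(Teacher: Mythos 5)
Your reduction of part (1) to the single claim that ``if every vertex of $M$ lies in $(\mathbb Z[1/n])^d$ then so does every vertex of $\tau(M)$,'' and your identification of irreducibility with primitivity of the edge lattice $\Lambda$ (i.e.\ $\Lambda\not\subseteq p\mathbb Z^d$ for every prime $p$), are both sound, and your part (2) --- apply part (1) to $\tau^{-1}$ using irreducibility of $P'$ --- is a clean symmetry argument that differs from the paper's, which instead argues by contraposition via a minimal horizontal lattice miniature $H'$ containing a putative exceptional $M'$. The difficulty is that all of the content sits in part (1), and you do not prove it: for non-horizontal miniatures you explicitly defer ``pushing the primitivity bookkeeping through $C$ and $A$ simultaneously'' as the delicate point, and even in the horizontal case you only control the edge vectors $\tfrac1sA(v_i-v_0)$ of $\tau(M)$, not its base vertex. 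A vertex $w$ of $M$ is a point of $(\mathbb Z[1/n])^d$ that need not lie in $\Lambda\otimes\mathbb Z[1/n]$, and $A$ is only known to map $\Lambda$ into $\mathbb Z^d$; applying $A$ to such a $w$ can introduce denominators dividing the index $[\mathbb Z^d:\Lambda]$, which primitivity does not exclude (primitivity says $\Lambda\not\subseteq p\mathbb Z^d$, not $\Lambda=\mathbb Z^d$).

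This is not a removable technicality: with only the stated hypotheses the step you flag as delicate actually fails. Take $d=2$, let $P$ be the triangle with vertices $(0,0)$, $(2,1)$, $(-1,2)$ (irreducible, since $c(2,1)\in\mathbb Z^2$ forces $c\in\mathbb Z$), let $P'$ be the triangle with vertices $(0,0)$, $(1,0)$, $(0,1)$, and let $\tau$ be the similarity with matrix $\tfrac15\bigl(\begin{smallmatrix}2&1\\-1&2\end{smallmatrix}\bigr)$, so that $\tau(P)=P'$. The triangle $M$ with vertices $(0,0)$, $(0,1)$, $(1,1)$ belongs to $\mathcal M_1(P)$, yet $\tau((0,1))=(1/5,2/5)\notin\mathbb Z^2$; likewise $\tfrac12P+(\tfrac12,\tfrac12)\in\mathcal H_2(P)$, yet its image under $\tau$ has the vertex $(3/10,1/10)\notin(\mathbb Z[1/2])^2$. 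So some hypothesis beyond irreducibility (e.g.\ that the edge vectors of $P$ generate $\mathbb Z^d$) is needed to make your $A(\Lambda)\subseteq\mathbb Z^d$ bookkeeping carry the day. For what it is worth, the paper's own proof of (1) silently assumes that every point of $P\cap(\mathbb Z[1/n])^d$ can be written as $\sum_ia_iv_i$ with $(a_i)\in(\mathbb Z[1/n])^d$, which is exactly this unjustified unimodularity; your instinct that this is the main obstacle is correct, but the obstacle is not overcome in your proposal.
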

\begin{proof}
\begin{enumerate}
\item[(1)]
Let $V \subset \mathbb Z^d$ be the set of all vertices of $P.$ 
For each integer $n > 0,$ since every point $\sum_{v \in V}a_v v$ in $P\cap (n^{-1}\mathbb Z)^d,$ where $a_v \in n^{-1}\mathbb Z$ for each $v \in V,$ is mapped to $\sum_{v \in V}a_v\tau (v)$ in $P'\cap (n^{-1}\mathbb Z)^d$ by $\tau,$ we have the desired inclusions.
\item[(2)]
We prove the contraposition. 
Let $M' \in \mathcal M_n(P')\setminus\{\tau (M) \mid M \in \mathcal M_n(P)\}.$ 
Let $H'$ be a horizontal lattice miniature of $P'$ with minimum volume under the condition that $M' \subset H'.$ 
Since $P$ is irreducible, we have $\tau ^{-1}(H') \neq P$ and $H' \neq P',$ which implies that $P'$ is reducible.\qedhere
\end{enumerate}
\end{proof} 
Now we are ready to prove Proposition \ref{prop-1}.
\begin{proof}[Proof of Proposition \ref{prop-1}]
Let $P_0$ and $P'_0$ be fundamental miniatures of $P$ and $P',$ respectively. 
Then there exist vectors $a,$ $a' \in \mathbb Z^d,$ integers $c,$ $c' > 0,$ and a lattice isomorphism $\tau :\mathbb R^d\to\mathbb R^d$ such that 
\[ P = c(P_0-a)+a, \quad P' = c'(P'_0-a')+a', \quad\text{and}\quad P'_0-a' = \tau (P_0-a),\] 
where the origin is a vertex of each of $P_0-a$ and $P'_0-a'.$ 
Combining Lemmas \ref{lem-1}--\ref{lem-3}, we obtain 
\begin{align*} 
&\frac{\mu_{\mathrm{av}}(P)}{\mathrm{vol}(P)} = \frac{c^d\mu_{\mathrm{av}}(P_0)}{c^d\mathrm{vol}(P_0)} = \frac{c'^d\mu_{\mathrm{av}}(P'_0)}{c'^d\mathrm{vol}(P'_0)} = \frac{\mu_{\mathrm{av}}(P')}{\mathrm{vol}(P')} \\ 
&\left(\text{resp}.~\frac{\mu_{\mathrm{nl}}(P)}{\mathrm{vol}(P)} = \frac{c^d\mu_{\mathrm{nl}}(P_0)}{c^d\mathrm{vol}(P_0)} = \frac{c'^d\mu_{\mathrm{nl}}(P'_0)}{c'^d\mathrm{vol}(P'_0)} = \frac{\mu_{\mathrm{nl}}(P')}{\mathrm{vol}(P')}\right).\qedhere 
\end{align*} 
\end{proof}
\section{Average-sized Miniatures of Lattice Squares}\label{sec-av}
\setcounter{equation}{0}
In this section, we prove Theorem \ref{thm-2}. 
We use the following lemma.
\begin{lem}\label{lem-4}
For any integer $n > 0,$ we have 
\begin{align} 
\sum_{i = 1}^ni(n+1-i)^2 = \sum_{i = 1}^ni^2(n+1-i) &= \frac{1}{12}n(n+1)^2(n+2) \label{eq-sum-2-1} \\  
\intertext{and} 
\sum_{i = 1}^ni^2(n+1-i)^3 = \sum_{i = 1}^ni^3(n+1-i)^2 &= \frac{1}{60}n(n+1)^2(n+2)(n^2+2n+2). \label{eq-sum-3-2} 
\end{align} 
\end{lem}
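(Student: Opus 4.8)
The plan is to first collapse the four sums into two by means of the reflection $i \mapsto n+1-i$, and then to evaluate the two survivors by a single Vandermonde-type convolution rather than by grinding out power sums.

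For the pairwise equalities, observe that as $i$ runs from $1$ to $n$ so does $j := n+1-i$, and the substitution $i \mapsto n+1-i$ interchanges the two summands in each line: it carries $i(n+1-i)^2$ to $(n+1-i)\,i^2 = i^2(n+1-i)$, and $i^2(n+1-i)^3$ to $(n+1-i)^2\,i^3 = i^3(n+1-i)^2$. Hence the two sums in \eqref{eq-sum-2-1} agree, as do the two in \eqref{eq-sum-3-2}, and it remains to compute one representative from each line.

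The key tool I would use is the convolution identity
\[ \sum_{i=0}^{m}\binom{i}{a}\binom{m-i}{b} = \binom{m+1}{a+b+1} \qquad (a,b \geq 0), \]
taken with $m = n+1$; for $a,b \geq 1$ the boundary terms $i=0$ and $i=n+1$ vanish, so that $\sum_{i=1}^{n}\binom{i}{a}\binom{n+1-i}{b} = \binom{n+2}{a+b+1}$. Writing $j = n+1-i$ and expanding the monomials in the binomial basis via $i=\binom{i}{1}$, $i^2=2\binom{i}{2}+\binom{i}{1}$, $j^2=2\binom{j}{2}+\binom{j}{1}$, and $j^3=6\binom{j}{3}+6\binom{j}{2}+\binom{j}{1}$, I obtain
\[ \sum_{i=1}^{n} i(n+1-i)^2 = 2\binom{n+2}{4}+\binom{n+2}{3} \]
and, after multiplying out $i^2 j^3$ and applying the convolution termwise,
\[ \sum_{i=1}^{n} i^2(n+1-i)^3 = 12\binom{n+2}{6}+18\binom{n+2}{5}+8\binom{n+2}{4}+\binom{n+2}{3}. \]

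It then remains only to convert these binomial sums into the claimed products. The first is immediate. The second is the one genuinely fiddly step, and the main obstacle of the proof: expanding the binomial coefficients and factoring, one should recover the common factor $\tfrac{1}{60}\,n(n+1)^2(n+2)$ together with the residual quadratic $n^2+2n+2$, and I would verify that quadratic by matching a couple of values of $n$ rather than by hand-expanding the degree-six polynomial. As a cross-check — and as a fully self-contained alternative to the convolution route — one may instead expand $i(n+1-i)^2$ and $i^2(n+1-i)^3$ directly and apply the Faulhaber formulas for $\sum i^k$ up to $k=5$, collecting terms after substituting $n+1$ for the constant; this reaches the same closed forms, with the concluding factorization again being the only place where arithmetic care is needed.
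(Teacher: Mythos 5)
Your proof is correct, but it takes a genuinely different route from the paper's. The paper works with the shifted sums $\sum_{i=0}^n i^2(n-i)$ and $\sum_{i=0}^n i^3(n-i)^2$, expands them in powers of $i$, evaluates them with the Faulhaber formulas for $\sum i^k$ up to $k=5$, factors the result, and then replaces $n$ by $n+1$; this is exactly the ``cross-check'' route you mention in your last sentence. Your primary route instead expands the monomials in the binomial basis and applies the convolution $\sum_{i}\binom{i}{a}\binom{n+1-i}{b}=\binom{n+2}{a+b+1}$, and your aggregates $2\binom{n+2}{4}+\binom{n+2}{3}$ and $12\binom{n+2}{6}+18\binom{n+2}{5}+8\binom{n+2}{4}+\binom{n+2}{3}$ check out against the claimed closed forms. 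This avoids the fifth-power Faulhaber formula and makes the common factor $n(n+1)(n+2)$ visible immediately, at the cost of a slightly fiddlier final conversion back to product form; the paper's route is more pedestrian but fully self-contained given the standard power-sum identities. Your reflection argument for the pairwise equalities coincides with the paper's (which simply declares them obvious). One small point of rigor: to certify the residual quadratic $n^2+2n+2$ by sampling values rather than by expansion, you must match at least seven values of $n$, since both sides of \eqref{eq-sum-3-2} are polynomials of degree six in $n$ -- ``a couple of values'' is not quite enough on its own, though the fix is only to check a few more.
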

\begin{proof}
First, we prove the identities 
\begin{align*} 
\sum_{i = 0}^ni(n-i)^2 = \sum_{i = 0}^ni^2(n-i) &= \frac{1}{12}(n-1)n^2(n+1) \\ 
\intertext{and} 
\sum_{i = 0}^ni^2(n-i)^3 = \sum_{i = 0}^ni^3(n-i)^2 &= \frac{1}{60}(n-1)n^2(n+1)(n^2+1) 
\end{align*}
for any integer $n \geq 0.$ 
In both identities, the first equalities are obvious, and the second equalities are proved as 
\allowdisplaybreaks[1]
\begin{align*} 
\sum_{i = 0}^ni^2(n-i) &= n\sum_{i = 0}^ni^2-\sum_{i = 0}^ni^3 \\ 
&= n\cdot\frac{1}{6}n(n+1)(2n+1)-\frac{1}{4}n^2(n+1)^2 \\ 
&= \frac{1}{12}n^2(n+1)(2(2n+1)-3(n+1)) \\ 
&= \frac{1}{12}(n-1)n^2(n+1) \\ 
\intertext{and} 
\sum_{i = 0}^ni^3(n-i)^2 &= n^2\sum_{i = 0}^ni^3-2n\sum_{i = 0}^ni^4+\sum_{i = 0}^ni^5 \\ 
&= n^2\cdot\frac{1}{4}n^2(n+1)^2-2n\cdot\frac{1}{30}n(n+1)(2n+1)(3n^2+3n-1)+\frac{1}{12}n^2(n+1)^2(2n^2+2n-1) \\ 
&= \frac{1}{60}n^2(n+1)(15n^2(n+1)-4(2n+1)(3n^2+3n-1)+5(n+1)(2n^2+2n-1)) \\ 
&= \frac{1}{60}n^2(n+1)(n^3-n^2+n-1) \\ 
&= \frac{1}{60}(n-1)n^2(n+1)(n^2+1), 
\end{align*} 
respectively. 
Replacing $n$ with $n+1,$ we obtain \eqref{eq-sum-2-1} and \eqref{eq-sum-3-2}.
\end{proof}
Now we are ready to prove Theorem \ref{thm-2}.
\begin{proof}[Proof of Theorem \ref{thm-2}]
By Proposition \ref{prop-1}, we have only to prove \eqref{eq-av-sq} if $P = [0,1]^2.$ 
For each 
\[ (i,j) \in \{ (i,j) \in \mathbb Z^2 \mid 1 \leq i \leq n\text{ and }0 \leq j \leq n-i\},\] 
the number of miniatures of $[0,1]^2$ with resolution $n$ spanned by the vectors $(i/n,j/n)$ and $(j/n,-i/n)$ is $(n+1-i-j)^2,$ and the area of such a miniature is $(\sqrt{(i/n)^2+(j/n)^2})^2 = (i^2+j^2)/n^2$ (see the figure below).
\begin{center}
\includegraphics[scale=0.8]{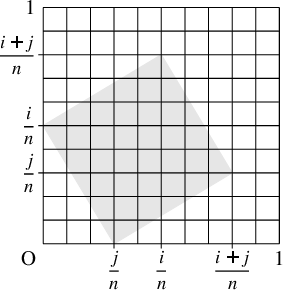}\label{fig1}
\end{center}
Hence, by \eqref{eq-sum-2-1}, we have 
\[\#\mathcal M_n([0,1]^2) = \sum_{i = 1}^n\sum_{j = 0}^{n-i}(n+1-i-j)^2 = \sum_{k = 1}^n(n+1-k)k^2 = \frac{1}{12}n(n+1)^2(n+2),\] 
where the second equality is shown by taking the diagonal sum from the top left to the bottom right in the figure below.
\begin{center}
\includegraphics[scale=0.8]{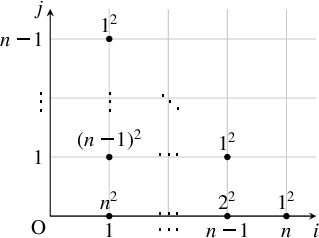}\label{fig2}
\end{center}
Furthermore, by \eqref{eq-sum-2-1} and \eqref{eq-sum-3-2}, we have 
\allowdisplaybreaks[1]
\begin{align*} 
\sum_{M \in \mathcal M_n([0,1]^2)}\mathrm{vol}(M) &= \sum_{i = 1}^n\sum_{j = 0}^{n-i}(n+1-i-j)^2\frac{i^2+j^2}{n^2} \\ 
&= \frac{1}{n^2}\sum_{k = 1}^nk^2\left(\sum_{i = 1}^{n+1-k}i^2+\sum_{j = 0}^{n-k}j^2\right) \\ 
&= \frac{1}{n^2}\sum_{k = 1}^nk^2\left( 0^2+2\sum_{i = 1}^{n-k}i^2+(n+1-k)^2\right) \\ 
&= \frac{1}{n^2}\sum_{k = 1}^nk^2\left(\frac{1}{3}(n-k)(n-k+1)(2n-2k+1)+(n+1-k)^2\right) \\ 
&= \frac{1}{3n^2}\left(\sum_{k = 1}^nk^2(n-k)(n-k+1)(2n-2k+1)+3\sum_{k = 1}^nk^2(n+1-k)^2\right) \\ 
&= \frac{1}{3n^2}\left(\sum_{k = 1}^n(n+1-k)^2(k-1)k(2k-1)+3\sum_{k = 1}^nk^2(n+1-k)^2\right) \\ 
&= \frac{1}{3n^2}\sum_{k = 1}^nk((k-1)(2k-1)+3k)(n+1-k)^2 \\ 
&= \frac{1}{3n^2}\sum_{k = 1}^nk(2k^2+1)(n+1-k)^2 \\ 
&= \frac{1}{3n^2}\left( 2\sum_{k = 1}^nk^3(n+1-k)^2+\sum_{k = 1}^nk(n+1-k)^2\right) \\ 
&= \frac{1}{3n^2}\left( 2\cdot\frac{1}{60}n(n+1)^2(n+2)(n^2+2n+2)+\frac{1}{12}n(n+1)^2(n+2)\right) \\ 
&= \frac{1}{180n^2}n(n+1)^2(n+2)(2(n^2+2n+2)+5) \\ 
&= \frac{1}{180n}(n+1)^2(n+2)(2n^2+4n+9), 
\end{align*} 
where the second equality is shown by taking the diagonal sum from the top left to the bottom right in the figure below.
\begin{center}
\includegraphics[scale=0.8]{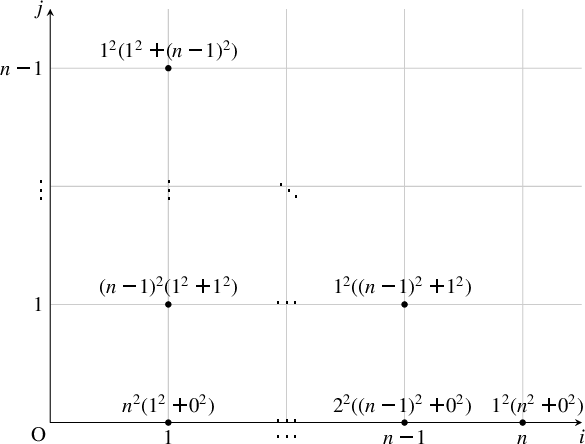}\label{fig3}
\end{center}
Thus, we obtain 
\begin{align*} 
\frac{\sum_{M \in \mathcal M_n([0,1]^2)}\mathrm{vol}(M)}{\#\mathcal M_n([0,1]^2)} &= \left.\dfrac{1}{180n}(n+1)^2(n+2)(2n^2+4n+9)\middle/\dfrac{1}{12}n(n+1)^2(n+2)\right. \\ 
&= \frac{2n^2+4n+9}{15n^2} \\ 
&\to \frac{2}{15} \quad (n \to \infty ).\qedhere 
\end{align*} 
\end{proof}
\section{Normal-sized Miniatures of Lattice Simplices}\label{sec-nl}
In this section, we prove Theorem \ref{thm-3}. 
For each integer $n > 0,$ let $S_d(n)$ denote the $n$-th {\itshape $d$-dimensional simplex number}, which is defined by 
\[ S_0(n) = 1 \quad\text{and}\quad S_d(n) = \sum_{i = 1}^nS_{d-1}(i) \quad (d > 0).\] 
The following formula is well-known.
\begin{prop}\label{prop-2}
For any integer $n > 0,$ we have 
\[ S_d(n) = \binom{n+d-1}{d}.\]
\end{prop}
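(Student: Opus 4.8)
The plan is to prove the formula by induction on the dimension $d$, reducing the inductive step to a single binomial summation identity. For the base case $d = 0,$ the definition gives $S_0(n) = 1,$ while $\binom{n-1}{0} = 1,$ so the two sides agree for every integer $n > 0.$

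For the inductive step, suppose $S_{d-1}(m) = \binom{m+d-2}{d-1}$ for all integers $m > 0.$ Substituting this into the defining recursion yields
\[ S_d(n) = \sum_{i = 1}^n S_{d-1}(i) = \sum_{i = 1}^n \binom{i+d-2}{d-1},\]
so it remains to show that this last sum equals $\binom{n+d-1}{d}.$ This summation identity is the hockey-stick identity: after the reindexing $j = i+d-2,$ the sum becomes $\sum_{j = d-1}^{n+d-2}\binom{j}{d-1},$ which telescopes to $\binom{n+d-1}{d}$ via $\binom{j+1}{d}-\binom{j}{d} = \binom{j}{d-1}.$

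To keep the argument self-contained, I would instead verify the identity by a second induction on $n$ using Pascal's rule. For $n = 1$ both sides equal $1;$ and assuming the identity for $n,$ Pascal's rule $\binom{n+d-1}{d}+\binom{n+d-1}{d-1} = \binom{n+d}{d}$ gives
\[ \sum_{i = 1}^{n+1}\binom{i+d-2}{d-1} = \binom{n+d-1}{d}+\binom{n+d-1}{d-1} = \binom{n+d}{d} = \binom{(n+1)+d-1}{d},\]
completing the inner induction and hence the outer one.

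Since every step is an elementary manipulation of binomial coefficients, there is no genuine obstacle here; the only point requiring care is the bookkeeping of the index shift, so that the lower summation bound and the boundary term appearing in Pascal's rule line up correctly. This is, of course, why the statement is flagged as well-known and relegated to a proposition.
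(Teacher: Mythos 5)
Your proof is correct. The paper itself offers no argument for this proposition---it simply declares the formula well-known and moves on---so there is nothing to compare against; your double induction (outer on $d$ via the defining recursion, inner on $n$ via Pascal's rule $\binom{n+d-1}{d}+\binom{n+d-1}{d-1} = \binom{n+d}{d}$) is a complete, standard, and self-contained verification, and the index bookkeeping in the hockey-stick step ($j = i+d-2$ running from $d-1$ to $n+d-2$) checks out.
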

We use this formula to prove the following proposition.
\begin{prop}\label{prop-3}
Let $r \geq 0$ and $n > 0$ be integers. 
Then we have 
\[\sum_{i = 1}^n i^rS_d(i) = \frac{n^{d+r+1}}{d!(d+r+1)}+O(n^{d+r}),\] 
where $O$ is the Landau symbol.
\end{prop}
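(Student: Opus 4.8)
The plan is to reduce the whole statement to two elementary facts: that $S_d(i)$ is a polynomial in $i$ of degree $d$, and the standard power-sum asymptotic. First I would invoke Proposition \ref{prop-2} to write
\[ S_d(i) = \binom{i+d-1}{d} = \frac{1}{d!}\,i(i+1)\cdots(i+d-1), \]
which exhibits $S_d(i)$ as a polynomial in $i$ of degree exactly $d$ whose leading coefficient is $1/d!$. Multiplying by $i^r$ then gives
\[ i^r S_d(i) = \frac{1}{d!}\,i^{d+r} + g(i), \]
where $g$ is a polynomial in $i$ of degree at most $d+r-1$ with coefficients depending only on $d$ and $r$.

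Next I would use the elementary power-sum estimate $\sum_{i=1}^n i^m = \frac{1}{m+1}n^{m+1} + O(n^m)$, valid for every integer $m \geq 0$; this can be obtained, for instance, by induction on $m$ from the telescoping identity for $(i+1)^{m+1}-i^{m+1}$, or by comparison with $\int_0^n x^m\,dx$. Applying it to the leading term yields
\[ \frac{1}{d!}\sum_{i=1}^n i^{d+r} = \frac{1}{d!}\left(\frac{n^{d+r+1}}{d+r+1} + O(n^{d+r})\right) = \frac{n^{d+r+1}}{d!(d+r+1)} + O(n^{d+r}). \]

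Finally, I would bound the contribution of the lower-order part: every monomial $i^s$ occurring in $g(i)$ has $s \leq d+r-1$, so $\sum_{i=1}^n i^s = O(n^{s+1}) = O(n^{d+r})$, and since $g$ is a fixed finite linear combination of such powers, $\sum_{i=1}^n g(i) = O(n^{d+r})$. Adding the two contributions gives the claimed asymptotic. I do not expect any genuine obstacle here: the argument is essentially bookkeeping of degrees, and the only points requiring minor care are confirming that the leading coefficient of $i^r S_d(i)$ is precisely $1/d!$, so that the main term carries the stated constant, and checking that every remaining term is absorbed into $O(n^{d+r})$.
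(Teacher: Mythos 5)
Your proposal is correct and follows essentially the same route as the paper: both use Proposition \ref{prop-2} to identify $S_d(i) = \frac{1}{d!}i^{d} + O(i^{d-1})$ and then apply the standard power-sum asymptotic $\sum_{i=1}^n i^k = \frac{1}{k+1}n^{k+1}+O(n^k)$ with $k = d+r$. You simply spell out the bookkeeping of the lower-order terms that the paper leaves implicit.
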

\begin{proof}
This follows from 
\[ S_d(i) = \frac{1}{d!}i^d+O(i^{d-1}) \quad\text{and}\quad \sum_{i = 1}^n i^k = \frac{1}{k+1}n^{k+1}+O(n^k),\] 
where $k = d+r.$
\end{proof}
\begin{prop}[{\cite[Proposition 1]{Hir}}]\label{prop-4}
We have 
\[\binom{2d+1}{d}(d+1)\sum_{r = 0}^d\frac{(-1)^r}{d+r+1}\binom{d}{r} = 1.\]
\end{prop}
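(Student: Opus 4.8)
The plan is to recognize the alternating sum as a Beta integral. First I would write each reciprocal as an integral via $\frac{1}{d+r+1} = \int_0^1 x^{d+r}\,dx$, so that
\[
\sum_{r=0}^d \frac{(-1)^r}{d+r+1}\binom{d}{r}
= \int_0^1 x^d \sum_{r=0}^d \binom{d}{r}(-x)^r\,dx
= \int_0^1 x^d(1-x)^d\,dx,
\]
where the interchange of the finite sum and the integral is harmless and the inner sum collapses by the binomial theorem. This reduces the problem to evaluating the Beta integral $\int_0^1 x^d(1-x)^d\,dx = B(d+1,d+1)$.

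Second I would evaluate this integral as $B(d+1,d+1) = \dfrac{(d!)^2}{(2d+1)!}$, either by citing the standard value of the Beta function or by the elementary recursion (integration by parts) $\int_0^1 x^a(1-x)^b\,dx = \frac{b}{a+1}\int_0^1 x^{a+1}(1-x)^{b-1}\,dx$ together with induction on $b$. Substituting $\binom{2d+1}{d} = \dfrac{(2d+1)!}{d!\,(d+1)!}$ then gives
\[
\binom{2d+1}{d}(d+1)\sum_{r=0}^d \frac{(-1)^r}{d+r+1}\binom{d}{r}
= \frac{(2d+1)!}{d!\,(d+1)!}\,(d+1)\,\frac{(d!)^2}{(2d+1)!}
= \frac{(d+1)\,d!}{(d+1)!} = 1,
\]
which is the desired identity.

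There is essentially no serious obstacle here; the only point requiring care is justifying the value of the Beta integral, which can be done purely by induction on $d$ if one prefers to avoid invoking special functions. As an alternative route that sidesteps integration entirely, one could prove the closed form $\sum_{r=0}^d \frac{(-1)^r}{d+r+1}\binom{d}{r} = \frac{(d!)^2}{(2d+1)!}$ directly by induction on $d$, using the absorption identity $\binom{d}{r} = \binom{d-1}{r} + \binom{d-1}{r-1}$ to relate the $d$-th sum to the $(d-1)$-th; the factorial simplification at the end is identical in either approach.
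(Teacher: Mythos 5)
Your argument is correct. Note, however, that the paper itself offers no proof of this identity: Proposition \ref{prop-4} is simply quoted from the author's earlier work \cite[Proposition 1]{Hir}, so there is no in-paper argument to compare against. Your Beta-integral derivation is a clean, self-contained substitute: writing $\frac{1}{d+r+1}=\int_0^1 x^{d+r}\,dx$, collapsing the alternating sum to $\int_0^1 x^d(1-x)^d\,dx = B(d+1,d+1)=\frac{(d!)^2}{(2d+1)!}$, and cancelling against $\binom{2d+1}{d}(d+1)=\frac{(2d+1)!}{(d!)^2}$ gives exactly $1$ (and the check at $d=1$, where the sum is $\tfrac12-\tfrac13=\tfrac16=B(2,2)$, confirms the bookkeeping). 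The only step needing independent justification is the value of the Beta integral, and your suggested integration-by-parts recursion or the purely combinatorial induction via Pascal's rule both close that gap without appeal to special functions. Either route would let the present paper stand on its own rather than lean on the external reference.
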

Now we are ready to prove Theorem \ref{thm-3}.
\begin{proof}[Proof of Theorem \ref{thm-3}]
Since the number of all horizontal miniatures of $P$ with similarity ratio $i/n$ is $S_d(n+1-i)$ for each integer $i$ with $1 \leq i \leq n,$ we have 
\begin{align*} 
\frac{\sum_{M \in \mathcal H_n(P)}\mathrm{vol}(M)}{\#\mathcal H_n(P)} &= \left.\left(\sum_{i = 1}^n S_d(n+1-i)\cdot\left(\frac{i}{n}\right) ^d\mathrm{vol}(P)\right)\middle/\sum_{i = 1}^n S_d(n+1-i)\right. \\ 
&= \left.\mathrm{vol}(P)\cdot\left(\sum_{i = 1}^n (n+1-i)^dS_d(i)\right)\middle/ n^d\sum_{i = 1}^n S_d(i)\right. \\ 
&= \left.\mathrm{vol}(P)\cdot\left(\sum_{i = 1}^n S_d(i)\sum_{r = 0}^d\binom{d}{r}(n+1)^{d-r}(-i)^r\right)\middle/ n^dS_{d+1}(n)\right. \\ 
&= \left.\mathrm{vol}(P)\cdot\left(\sum_{r = 0}^d (-1)^r\binom{d}{r}(n+1)^{d-r}\sum_{i = 1}^n i^rS_d(i)\right)\middle/ n^dS_{d+1}(n)\right. 
\end{align*} 
by the binomial theorem. 
Since 
\[ (n+1)^{d-r}\sum_{i = 1}^n i^rS_d(i) = \frac{1}{d!(d+r+1)}n^{2d+1}+O(n^{2d})\] 
and 
\[ n^dS_{d+1}(n) = \frac{1}{(d+1)!}n^{2d+1}+O(n^{2d}),\] 
we obtain 
\begin{align*} 
\frac{\sum_{M \in \mathcal H_n(P)}\mathrm{vol}(M)}{\#\mathcal H_n(P)} &\to \left.\mathrm{vol}(P)\cdot\left(\sum_{r = 0}^d(-1)^r\binom{d}{r}\frac{1}{d!(d+r+1)}\right)\middle/\dfrac{1}{(d+1)!}\right. \\ 
&= \mathrm{vol}(P)\cdot (d+1)\sum_{r = 0}^d\frac{(-1)^r}{d+r+1}\binom{d}{r} \\  
&= \binom{2d+1}{d}^{-1}\mathrm{vol}(P) \quad (n \to \infty ) 
\end{align*} 
by Proposition \ref{prop-4}.
\end{proof}

\end{document}